\colorlet{shadecolor}{blue!15}
\theoremstyle{plain}
\newtheorem{theorem}{Theorem}[section]
\newtheorem{lemma}[theorem]{Lemma}
\theoremstyle{remark}
\newcommand{\be}[1]{\begin{equation}\label{#1}}
\newcommand{\ee}{\end{equation}}
\numberwithin{equation}{section}
\newcommand{\ba}[1]{\begin{align}\label{#1}}
\newcommand{\ea}{\end{align}}
\numberwithin{equation}{section}
\newcommand{\ben}{\begin{equation*}}
\newcommand{\een}{\end{equation*}}
\numberwithin{equation}{section}
\newcommand{\calC}{\mathcal{C}}
\newcommand{\calS}{\mathcal{S}}
\newcommand{\bbE}{\mathbb{E}}
\newcommand{\bbP}{\mathbb{P}}
\newcommand{\bbZ}{\mathbb{Z}}
\newcommand{\rk}[1]{\bgroup\color{red}%
  \par\medskip\hrule\smallskip%
  \noindent\textbf{#1}%
  \par\smallskip\hrule\medskip\egroup}
\newcommand{\lr}[1][]{\stackrel{#1}\longleftrightarrow}
\newcommand{\nlr}[1][]{\overset{#1}{\not\longleftrightarrow}}
\title{A new proof of the sharpness of the phase transition for
  Bernoulli percolation on $\mathbb Z^d$}
\author{Hugo Duminil-Copin and Vincent Tassion}
\date{\today}
\begin{document}
\maketitle

\begin{abstract}
We provide a new proof of the sharpness of the phase transition for
nearest-neighbour Bernoulli percolation. More precisely, we show that
\begin{itemize}
\item for $p<p_c$, the probability that the origin is connected by an
  open path to distance $n$ decays exponentially fast in $n$.
\item for $p>p_c$, the probability that the origin belongs to an
  infinite cluster satisfies the mean-field lower bound $\theta(p)\ge
  \tfrac{p-p_c}{p(1-p_c)}$.
\end{itemize}
This note presents the argument of \cite{DumTas15}, which is
valid for long-range Bernoulli percolation (and for the Ising model) on arbitrary transitive graphs in the simpler framework
of nearest-neighbour Bernoulli percolation on $\bbZ^d$. \end{abstract}

\section{Statement of the result}
\paragraph{Notation.}
Fix an integer $d\ge 2$. We consider the
$d$-dimensional hypercubic lattice $(\bbZ^d,\bbE^d)$. Let
$\Lambda_n=\{-n,\ldots,n\}^d$, and let $\partial \Lambda_n:=\Lambda_n\setminus\Lambda_{n-1}$ be its
vertex-boundary. Throughout this note, \emph{$S$
always stands for a finite set of vertices containing the origin}.
Given such a set, we denote its edge-boundary by $\Delta S$, defined
by all the edges $\{x,y\}$ with $x\in S$ and $y\notin S$.
\medbreak
Consider the
Bernoulli bond percolation measure $\bbP_p$ on $\{0,1\}^{\bbE^d}$ for which each edge of
$\bbE^d$ is declared {\em open} with probability $p$ and closed otherwise,
independently for different edges. 
\medbreak
Two vertices $x$ and
$y$ are {\em connected in} $S\subset V$ if there exists a path of
vertices $(v_k)_{0\le k\le K}$ in $S$ such that $v_0=x$, $v_K=y$, and
$\{v_k,v_{k+1}\}$ is open for every $0\le k<K$. We denote this event
by $x\lr[S] y$. If $S=\bbZ^d$, we drop it from the notation. We set
$0\lr \infty$ (resp.\@
$0\lr \partial\Lambda_n$) if $0$ is connected to
infinity (resp.\@ $0$ is connected to a vertex in
$\partial\Lambda_n$).

\paragraph{Phase transition.}
A new idea of this paper is to use a different definition of the critical
parameter than the standard one. This new definition relies
on the following quantity. For $p\in[0,1]$ and $0\in
S\subset \bbZ^d$, define
\begin{equation}
  \label{eq:16}
  \varphi_p(S):=p\sum_{\{x,y\}\in \Delta S} \bbP_p[0\lr[S] x]
\end{equation}
and introduce the following quantities:
\begin{align}
  \label{eq:17}
  \tilde p_c&:= \sup\big\{p\in[0,1] \text{ s.t.\@ there exists a finite
    set $0\subset S \subset \bbZ^d$ with $\varphi_p(S)<1$}\big\},\\
    p_c&:=\sup\{p\text{ s.t. $\bbP_p[0\longleftrightarrow\infty]=0$}\}\nonumber.
\end{align}
We are now in a position to state our main result.
\begin{theorem}\label{thm:perco} For any $d\ge2$, $\tilde p_c=p_c$. Furthermore,
\begin{enumerate}
\item\label{item:1}For $p<p_c$, there exists $c=c(p)>0$ such that for every $n\ge 1$,
$$\bbP_p[0\longleftrightarrow \partial\Lambda_n]\le e^{-c n}.$$
\item\label{item:2} For $p>p_c$, \[\bbP_p[0\longleftrightarrow\infty]\ge
  \frac{p-p_c}{p(1-p_c)}.\]
\end{enumerate}
\end{theorem}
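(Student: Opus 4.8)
\emph{Strategy.} I would prove the two bounds separately. Note first that for fixed finite $S\ni0$ the quantity $\varphi_p(S)$ is a polynomial in $p$ with nonnegative coefficients, hence continuous and nondecreasing in $p$; consequently $p<\tilde p_c$ forces the existence of a finite $S\ni0$ with $\varphi_p(S)<1$, while $p\ge\tilde p_c$ forces $\varphi_p(S)\ge1$ for every finite $S\ni0$. Part~1 below shows that the first situation implies exponential decay, so $\theta(p)=0$ and $p\le p_c$, whence $\tilde p_c\le p_c$. Part~2 shows that the second situation implies the mean-field bound, which is positive for $p>\tilde p_c$, so $p\ge p_c$, whence $\tilde p_c\ge p_c$. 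Thus $\tilde p_c=p_c$, and items~\ref{item:1} and~\ref{item:2} are exactly Parts~1 and~2 read at $p<p_c$ and $p>p_c$ (using $\tfrac{p-\tilde p_c}{p(1-\tilde p_c)}=\tfrac{p-p_c}{p(1-p_c)}$).

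\emph{Part 1 (some finite $S\ni0$ has $\varphi_p(S)<1$).} Pick $L$ with $S\subseteq\Lambda_L$ and set $\theta_n:=\bbP_p[0\lr\partial\Lambda_n]$. The crux is the renewal-type bound $\theta_n\le\varphi_p(S)\,\theta_{n-L-1}$ for $n>L$. Condition on $C:=\{x:0\lr[S]x\}$, the cluster of $0$ for edges inside $S$, so that $\{C=A\}$ depends only on edges with both endpoints in $S$. On $\{0\lr\partial\Lambda_n\}$, take an open path from $0$ to $\partial\Lambda_n$ and let $x$ be its last vertex lying in $C$; then the next edge $\{x,y\}$ is open, must lie in $\Delta S$ (otherwise $y$ would also lie in $C$), and the rest of the path shows $y\lr\partial\Lambda_n$ in $\bbZ^d\setminus C$. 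The events $\{C=A\}$, $\{\{x,y\}\ \text{open}\}$ and $\{y\lr\partial\Lambda_n\ \text{in}\ \bbZ^d\setminus A\}$ depend on pairwise disjoint edge sets, so a union bound over $\{x,y\}\in\Delta S$ gives $\theta_n\le p\sum_{\{x,y\}\in\Delta S}\bbP_p[0\lr[S]x]\,\bbP_p[y\lr\partial\Lambda_n]$; each such $y$ lies in $\Lambda_{L+1}$, so translation invariance bounds $\bbP_p[y\lr\partial\Lambda_n]\le\theta_{n-L-1}$, and the definition of $\varphi_p$ closes the estimate. Iterating shows $\theta_n$ decays exponentially in $n$, hence $\theta(p)=\lim_n\theta_n=0$.

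\emph{Part 2 ($\varphi_p(S)\ge1$ for all finite $S\ni0$).} The target is the differential inequality $\theta_n'(p)\ge\frac{1}{p(1-p)}\bigl(1-\theta_n(p)\bigr)$. By Russo's formula $\theta_n'(p)=\sum_e\bbP_p[e\ \text{pivotal for}\ 0\lr\partial\Lambda_n]$; since pivotality of $e$ does not depend on the state of $e$, and since on $\{0\nlr\partial\Lambda_n\}$ a pivotal edge is necessarily closed, this equals $\frac{1}{1-p}\bbE_p\!\left[\mathbbm{1}_{0\nlr\partial\Lambda_n}\,N\right]$ with $N$ the number of pivotal edges. Now set $D:=\{x\in\Lambda_n:x\lr[\Lambda_n]\partial\Lambda_n\}$ and $S:=\Lambda_n\setminus D$. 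On $\{0\nlr\partial\Lambda_n\}$ one checks that the cluster of $0$ is contained in the finite set $S$ (which contains $0$), that a pivotal edge is exactly an edge joining the cluster of $0$ to $D$, and that, since $S\subseteq\Lambda_{n-1}$, every edge of $\Delta S$ lands in $D$; hence $N=\#\{\{x,y\}\in\Delta S:\,0\lr[S]x\}$. Moreover $\{D=D_0\}$ does not depend on the edges inside $\Lambda_n\setminus D_0$, so conditionally on it those edges are i.i.d.\ Bernoulli$(p)$, whence $\bbE_p[N\mid D=D_0]=\tfrac1p\sum_{\{x,y\}\in\Delta S}\bbP_p[0\lr[S]x]=\tfrac1p\varphi_p(\Lambda_n\setminus D_0)\ge\tfrac1p$. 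Averaging over $D_0$ yields the differential inequality; dividing by $1-\theta_n$ and integrating from $\tilde p_c$ to $p$ gives $\theta_n(p)\ge\tfrac{p-\tilde p_c}{p(1-\tilde p_c)}$, and $n\to\infty$ gives the same for $\theta(p)$, which is positive when $p>\tilde p_c$.

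\emph{Main obstacle.} The delicate point is producing a \emph{finite} set in the derivative so that the hypothesis $\varphi_p(S)\ge1$ (a statement about finite $S$) can be applied. Conditioning on the cluster of the origin is the wrong move: it produces $\sum_{\{x,y\}\in\Delta S}\bbP_p[y\lr\partial\Lambda_n\ \text{in}\ \bbZ^d\setminus S]$, which has no obvious relation to $\varphi_p(S)$. Conditioning instead on the cluster $D$ of $\partial\Lambda_n$ inside $\Lambda_n$ leaves the edges inside the finite set $\Lambda_n\setminus D$ i.i.d., and the conditional expected number of pivotal edges is then exactly $\tfrac1p\varphi_p(\Lambda_n\setminus D)$. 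Getting this bookkeeping right—that the origin's cluster lies in $\Lambda_n\setminus D$, and that $\Delta(\Lambda_n\setminus D)$ consists only of edges leading into $D$—is the real content; the remainder is Russo's formula together with an elementary comparison of differential inequalities.
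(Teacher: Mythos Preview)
Your proof is correct and matches the paper's approach essentially line for line: Part~1 conditions on the cluster of $0$ inside $S$ to obtain the same renewal inequality, and Part~2 conditions on $D$, the complement in $\Lambda_n$ of what the paper calls $\calS=\{x\in\Lambda_n:x\nlr\partial\Lambda_n\}$, to derive the identical differential inequality via Russo's formula. The ``main obstacle'' you single out---exploring from $\partial\Lambda_n$ rather than from $0$ so that the residual random set is finite and contains the origin---is precisely the paper's key observation.
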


\noindent\textbf{Remarks.}
\begin{enumerate}
\item We refer to \cite{DumTas15} for a detailed bibliography, and for a version of the proof valid in greater generality. The aim of this note is to provide a proof in the simplest framework.
\item Theorem~\ref{thm:perco} was proved by Aizenman and
  Barsky~\cite{AizBar87} in the more general framework of long-range
  percolation. In their proof, they consider an additional parameter
  $h$ corresponding to an external field, and
  they derive the results from differential inequalities satisfied by
  the thermodynamical quantities of the model. A different proof, based on the
  geometric study of the pivotal edges, was obtained at the same time by
  Menshikov~\cite{Men86}. These two proofs are also presented in \cite{Gri99a}. 
  \item In the definition of $\tilde p_c$, the set of parameters $p$
  such that there exists a finite set $0\subset S \subset \bbZ^d$ with
  $\varphi_p(S)<1$ is an open subset of $[0,1]$. Thus, $\tilde p_c$ do
  not belong to this set. 
   \begin{center}
  \includegraphics[width=9cm]{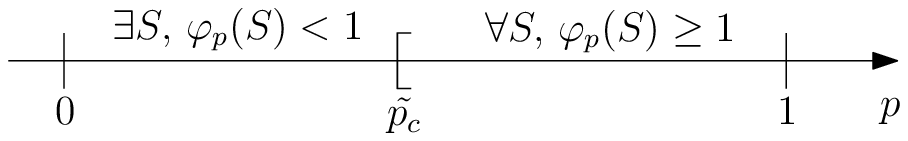}
\end{center}
We obtain that the expected size of the
  cluster of the origin satisfies that for every $p>
   p_c$, $$\sum_{x\in\bbZ^d}\bbP_p[0\longleftrightarrow x]\ge\sum_{n\ge 0} \varphi_p(\Lambda_n) =+\infty.$$
   \item Since $\varphi_p(\{0\})=2dp$, we obtain $p_c\ge 1/{2d}$.
\item Item~\ref{item:2} provides a mean-field lower bound for the infinite cluster density.
\item Theorem~\ref{thm:perco} implies that $p_c\le 1/2$ on $\bbZ^2$.
  Combined with Zhang's argument  \cite[Lemma 11.12]{Gri99a}, this
  shows that $p_c=1/2$.
\end{enumerate}

\section{Proof of the theorem}
It is sufficient to show Items 1 and 2 with $p_c$ replaced by $\tilde p_c$ (since it immediately implies the equality $p_c=\tilde p_c$).
\subsection{Proof of Item~\ref{item:1}}
The proof of Item~\ref{item:1} can be derived from
the BK-inequality~\cite{BerKes85}. We present here an exploration
argument, similar to the one in~\cite{Ham57}, which does not rely on the
BK-inequality. Let $p<\tilde p_c$. By definition, one can fix a
finite set $S$ containing the origin, such that $ \varphi_p(S)<1$. Let
$L>0$ such that $S\subset \Lambda_{L-1}$.

Let $k\ge 1$ and assume
that the event $0\lr\partial\Lambda_{kL}$ holds. Let $$\calC=\{z\in S:0\lr[S]z\}.$$
Since $S\cap\partial
\Lambda_{kL}=\emptyset$, there exists an edge $\{x,y\}\in\Delta S$ such
that  the following events occur:
\begin{itemize}[noitemsep,nolistsep]\item $0$ is connected to $x$ in $S$,
\item $\{x,y\}$ is open,
\item $y$ is connected to $\partial \Lambda_{kL}$ in $\calC^c$.
\end{itemize}
Using first the union bound, and then a decomposition with respect to possible values of $\calC$, we find
\begin{align}
  &\bbP_p[0\lr\partial\Lambda_{kL}]\\
  &\le \sum_{\{x,y\}\in
    \Delta S}\sum_{C\subset S}\bbP_p\big[\{0\lr[S]x,\calC=C\}\cap\{\{x,y\}\text{ is open}\}\cap\{y\lr[\bbZ^d\setminus C]\partial
  \Lambda_{kL}\}\big]\\
&= p\sum_{\{x,y\}\in
    \Delta S}\sum_{C\subset S}\bbP_p\big[0\lr[S]x,\calC=C\big]\bbP_p\big[y\lr[\bbZ^d\setminus C]\partial
  \Lambda_{kL}\big].\end{align}
In the second line, we used the fact that the three events depend on different sets of edges and are therefore independent. Since $y\in
\Lambda_L$, one can bound $\bbP_p[y\lr[\bbZ^d\setminus C]\partial
\Lambda_{kL}]$ by  $\bbP_p[0\lr\partial  \Lambda_{(k-1)L}]$ in the last expression. Hence, we find
\begin{equation}
  \bbP_p[0\lr\partial\Lambda_{kL}]\le\varphi_p(S)\bbP_p\big[y\lr\partial
  \Lambda_{(k-1)L}\big]
\end{equation}
which by induction gives
\begin{equation}
  \label{eq:18}
  \bbP_p[0\lr\partial\Lambda_{kL}]\le\varphi_p(S)^{k-1}.
\end{equation}
This proves the desired exponential decay.

\subsection{Proof of Item~\ref{item:2}}
\label{sec:proof-item}
Let us start by the following lemma providing a differential
inequality valid for every $p$.
 \begin{lemma}\label{lem:meanField}
Let $p\in[0,1]$ and $n\ge 1$,
  \begin{equation}
    \label{eq:4}
    \frac d{dp} \bbP_p[0\lr\partial\Lambda_n]\ge \frac1{p(1-p)}\cdot
    \inf_{\substack{S\subset\Lambda_n\\0\in S}}\varphi_p(S) \cdot\big(1-\bbP_p[0\lr\partial\Lambda_n]\big).
  \end{equation}
  \end{lemma}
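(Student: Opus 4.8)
The plan is to combine Russo's formula with an exploration of the set of vertices \emph{disconnected} from $\partial\Lambda_n$, arranged so that the functional $\varphi_p$ appears on the nose rather than as an inequality. Write $\theta_n(p):=\bbP_p[0\lr\partial\Lambda_n]$. Since a nearest‑neighbour path from $0$ cannot reach the complement of $\Lambda_n$ before crossing $\partial\Lambda_n$, the event $\{0\lr\partial\Lambda_n\}$ is increasing and depends only on the finitely many edges inside $\Lambda_n$, so Russo's formula gives $\theta_n'(p)=\sum_e\bbP_p[e\text{ pivotal for }\{0\lr\partial\Lambda_n\}]$, the sum over edges $e$ inside $\Lambda_n$. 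On $\{e\text{ pivotal}\}$ the state of $e$ is independent of everything else, and there ``$e$ closed'' coincides with $\{0\nlr\partial\Lambda_n\}$; hence $\bbP_p[e\text{ pivotal}]=\tfrac1{1-p}\bbP_p[\{e\text{ pivotal}\}\cap\{0\nlr\partial\Lambda_n\}]$ and
\[
  \theta_n'(p)=\frac1{1-p}\sum_{e}\bbP_p\big[\{e\text{ pivotal}\}\cap\{0\nlr\partial\Lambda_n\}\big].
\]

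Now decompose $\{0\nlr\partial\Lambda_n\}$ according to the random set $\mathsf S:=\{x\in\Lambda_n:\ x\nlr\partial\Lambda_n\}$, noting that on this event $0\in\mathsf S\subseteq\Lambda_{n-1}$, so $\mathsf S$ is admissible in the infimum in \eqref{eq:4}. Fix a finite set $S$ with $0\in S\subseteq\Lambda_{n-1}$. The event $\{\mathsf S=S\}$ says exactly that all edges of $\Delta S$ are closed and that every vertex of $\Lambda_n\setminus S$ is joined to $\partial\Lambda_n$ using edges inside $\Lambda_n\setminus S$; in particular it does not depend on the edges inside $S$, whereas $\{0\lr[S]x\}$ depends only on those edges, so the two events are independent. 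Moreover, on $\{\mathsf S=S\}$ an edge $e=\{x,y\}$ with $x\in S$, $y\notin S$ is pivotal if and only if $0\lr[S]x$: such an $e$ is forced to be closed, and opening it concatenates a path $0\lr[S]x$, the edge $\{x,y\}$, and a path from $y\in\Lambda_n\setminus S$ to $\partial\Lambda_n$; conversely a pivotal edge must link the cluster of $0$ — which is contained in $\mathsf S=S$ — to a vertex connected to $\partial\Lambda_n$, i.e. to a vertex of $\Lambda_n\setminus S$. Therefore, by independence,
\[
  \sum_{e}\bbP_p\big[\{e\text{ pivotal}\}\cap\{\mathsf S=S\}\big]=\sum_{\{x,y\}\in\Delta S}\bbP_p[\mathsf S=S]\,\bbP_p[0\lr[S]x]=\tfrac1p\,\varphi_p(S)\,\bbP_p[\mathsf S=S].
\]

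Summing over the admissible sets $S$, using $\varphi_p(S)\ge\inf_{0\in S'\subseteq\Lambda_n}\varphi_p(S')$ and $\sum_S\bbP_p[\mathsf S=S]=\bbP_p[0\nlr\partial\Lambda_n]=1-\theta_n(p)$, one gets
\[
  \theta_n'(p)=\frac1{p(1-p)}\sum_{S}\varphi_p(S)\,\bbP_p[\mathsf S=S]\ \ge\ \frac1{p(1-p)}\Big(\inf_{0\in S'\subseteq\Lambda_n}\varphi_p(S')\Big)\big(1-\theta_n(p)\big),
\]
which is \eqref{eq:4}. (For $p\in\{0,1\}$ the statement is vacuous.)

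The main obstacle is picking the right random set. Exploring the cluster $\calC$ of the origin instead leads, after the identical manipulations, to a term $\sum_{\{x,y\}\in\Delta\calC}\bbP_p[y\lr[\Lambda_n\setminus\calC]\partial\Lambda_n]$, which is \emph{not} $\varphi_p(\calC)/p$ and which one cannot obviously bound below by $\inf_S\varphi_p(S)$. Exploring from $\partial\Lambda_n$ instead makes the surviving set $\mathsf S=\Lambda_n\setminus\{x:x\lr\partial\Lambda_n\}$, which still contains $0$, and then $p^{-1}\varphi_p(\mathsf S)$ is \emph{exactly} the conditional expected number of pivotal edges. The only remaining care is bookkeeping: checking precisely which edges the events $\{\mathsf S=S\}$ and $\{0\lr[S]x\}$ depend on (to license the independence), and observing that because $\mathsf S\subseteq\Lambda_{n-1}$ every edge of $\Delta\mathsf S$ lies inside $\Lambda_n$, so that the edge‑boundary $\Delta S$ in the definition of $\varphi_p$ is precisely the set of candidate pivotal edges and no term is lost.
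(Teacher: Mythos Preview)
Your proof is correct and follows essentially the same route as the paper: Russo's formula, the decomposition according to the random set $\mathsf S=\{x\in\Lambda_n:x\nlr\partial\Lambda_n\}$ explored from the boundary, the identification of pivotal edges on $\{\mathsf S=S\}$ as the boundary edges $\{x,y\}\in\Delta S$ with $0\lr[S]x$, and the independence of $\{\mathsf S=S\}$ from $\{0\lr[S]x\}$. Your additional observations (that $\mathsf S\subseteq\Lambda_{n-1}$ so all of $\Delta\mathsf S$ lies in $\Lambda_n$, and the discussion of why exploring the cluster of $0$ would not yield $\varphi_p$) are accurate refinements but do not change the argument.
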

  Let us first see how it implies Item~\ref{item:2} of
  Theorem~\ref{thm:perco}. Integrating the differential inequality~\eqref{eq:4} between
  $\tilde p_c$ and $p>\tilde p_c$ implies that for every $n\ge1$,
  $\bbP_p[0\lr\partial\Lambda_n]\ge \frac{p-\tilde p_c}{p(1-\tilde p_c)}$. By letting $n$
  tend to infinity, we obtain the desired lower bound on $\bbP_p[0\longleftrightarrow\infty]$.

\begin{proof}[Proof of Lemma~\ref{lem:meanField}]
  Recall that $\{x,y\}$ is pivotal for the configuration
$\omega$ and the event $\{0\longleftrightarrow\partial\Lambda_n\}$ if
$\omega_{\{x,y\}}\notin \{0\longleftrightarrow\partial\Lambda_n\}$ and
$\omega^{\{x,y\}}\in \{0\longleftrightarrow\partial\Lambda_n\}$. (The
configuration $\omega_{\{x,y\}}$, resp.\@ $\omega^{\{x,y\}}$,  coincides
with $\omega$ except that the edge $\{x,y\}$ is closed, resp.\@
open.) By Russo's formula (see \cite[Section 2.4]{Gri99a}), we have
  \begin{align}
        \frac d{dp}
        \bbP_p[0\lr\partial\Lambda_n]&=\sum_{e\subset \Lambda_n}
        \bbP_p\big[\text{$e$ is pivotal}\big]\\
        &=\frac1{1-p}\sum_{e\subset \Lambda_n}
        \bbP_p\big[\text{$e$ is pivotal},\,0\nlr\partial\Lambda_n\big].
  \end{align}
  Define the following random subset of $\Lambda_n$:
\[\calS:=\{x\in\Lambda_n\text{ such that } x\not\longleftrightarrow
\partial\Lambda_n\}.\]
The boundary of $\calS$ corresponds to the outmost blocking surface (which can be obtained by exploring from the outside the set of vertices connected to the boundary). When $0$ is not connected to $\partial\Lambda_n$, the set $\calS$ is
always a subset of $\Lambda_n$ containing the origin. By summing over
the possible values for $\calS$, we obtain
 \begin{equation}
        \frac d{dp}
        \bbP_p[0\lr\partial\Lambda_n]=\frac1{1-p}\sum_{\substack{S\subset
            \Lambda_n\\0\in S}} \sum_{e\subset \Lambda_n}
        \bbP_p\big[\text{$e$ is pivotal},\,\calS=S\big]
  \end{equation}
  Observe that on the event $\calS=S$, the pivotal edges are 
  the edges $\{x,y\}\in \Delta S$ such that $0$ is connected to $x$
  in $S$. This implies that
  \begin{equation}
    \label{eq:19}
    \frac d{dp}
        \bbP_p[0\lr\partial\Lambda_n]=\frac1{1-p}\sum_{\substack{S\subset
            \Lambda_n\\0\in S}} \sum_{\{x,y\}\in \Delta S}
        \bbP_p\big[0\lr[S]x,\, \calS=S\big].
  \end{equation}
  The event $\{\calS=S\}$ is measurable with respect to the
  configuration outside $S$ and is therefore independent of $\{0\lr[S]x\}$. We obtain
   \begin{align}
    \frac d{dp}
        \bbP_p[0\lr\partial\Lambda_n]&=\frac1{1-p}\sum_{\substack{S\subset
            \Lambda_n\\0\in S}} \sum_{\{x,y\}\in \Delta S}
        \bbP_p\big[0\lr[S]x\big] \bbP_p\big[\calS=S\big]\\
        &=\frac1{p(1-p)}\sum_{\substack{S\subset
            \Lambda_n\\0\in S}}\varphi_p(S)\bbP_p\big[\calS=S\big]\\
        &\ge \frac1{p(1-p)}\inf_{\substack{S\subset
            \Lambda_n\\0\in S}}\varphi_p(S)\cdot\bbP_p\big[0\nlr\partial\Lambda_n],
  \end{align}
  as desired.
\end{proof}

\paragraph{Acknowledgments} This work was supported by a grant from the Swiss FNS and the NCCR SwissMap also founded by the swiss NSF. \bibliographystyle{alpha}
\bibliography{biblicomplete}
\small\begin{flushright}
\textsc{D\'epartement de Math\'ematiques}
  \textsc{Universit\'e de Gen\`eve}
  \textsc{Gen\`eve, Switzerland}
  \textsc{E-mail:} \texttt{hugo.duminil@unige.ch}, \texttt{vincent.tassion@unige.ch}
\end{flushright}
\end{document}